\documentclass[11pt, reqno]{amsart}
\usepackage{amssymb,amsthm, amsmath, amsfonts}
\usepackage{graphics}
\usepackage{hyperref}
\usepackage[all]{xy}
\usepackage{enumerate}
\usepackage{mathrsfs}

\setlength{\oddsidemargin}{0.2in}
\setlength{\evensidemargin}{0.2in}
\setlength{\textwidth}{6.1in}

\theoremstyle{plain}
\newtheorem{theorem}{Theorem}

\newtheorem{proposition}[theorem]{Proposition}
\newtheorem{corollary}[theorem]{Corollary}

\theoremstyle{definition}
\newtheorem{definition}[theorem]{Definition}
\newtheorem{remark}[theorem]{Remark}
\newtheorem{notation}[theorem]{Notation}

\DeclareMathOperator{\Hom}{Hom}
\DeclareMathOperator{\id}{id}
\newcommand{\C}{{\mathrm{C}}}
\newcommand{\Mod}{{\mathrm{\mbox{-}Mod}}}
\newcommand{\Ob}{{\mathrm{Ob}}}
\newcommand{\FB}{{\mathrm{FB}}}
\newcommand{\FI}{{\mathrm{FI}}}
\newcommand{\kk}{{\Bbbk}}
\newcommand{\sst}{{\sqcup\{\star\}}}
\newcommand{\ws}{{\widetilde{{S}}}}

\title{On the negative-one shift functor for FI-modules}

\author{Wee Liang Gan}
\address{Department of Mathematics, University of California, Riverside, CA 92521, USA}
\email{wlgan@math.ucr.edu}

%\keywords{FI-module, negative-one shift functor, shift functor, derivative functor}

%\subjclass[2010]{18A40}

\begin{document}

\begin{abstract}
We show that the negative-one shift functor $\ws_{-1}$ on the category of FI-modules is a left adjoint of the shift functor $S$ and a right adjoint of the derivative functor $D$. We show that for any FI-module $V$, the coinduction $QV$ of $V$ is an extension of $V$ by $\ws_{-1}V$.
\end{abstract}

\maketitle

\section{Introduction}

The shift functor $S$ and the derivative functor $D$ on the category of $\FI$-modules have played an essential role in several recent works, for example, \cite{CE, Ga, Li3, LRa, LYu, Ra}. The modest goal of this article is to explain how they are related to the negative-one shift functor $\ws_{-1}$ introduced in \cite{CEFN}. We also explain how the coinduction functor $Q$ defined in \cite{GL} is related to $\ws_{-1}$.

Let us begin by recalling some definitions.

Let $\kk$ be a commutative ring. Let $\C$ be a small category. A \emph{$\C$-module} is a functor from $\C$ to the category of $\kk$-modules. A \emph{homomorphism of $\C$-modules} is a natural transformation of functors. For any $\C$-module $V$ and $X\in\Ob(\C)$, we write $V_X$ for $V(X)$.

Let $\FI$ be the category whose objects are the finite sets and whose morphisms are the injective maps. 

Fix a one-element set $\{\star\}$ and define a functor $\sigma: \FI \to \FI$ by $X \mapsto X \sst$ for each finite set $X$. If $f:X\to Y$ is a morphism in $\FI$, then $\sigma(f): X\sst \to Y \sst$ is the map $f \sqcup \id_{\{\star\}}$. Following \cite[Definition 2.8]{CEFN}, the \emph{shift functor} $S$ from the category of $\FI$-modules to itself is defined by $SV = V \circ \sigma$ for every $\FI$-module $V$.

Suppose $V$ is an $\FI$-module. For any finite set $X$, one has $(SV)_X = V_{X\sst}$. There is a natural $\FI$-module homomorphism $\iota: V \to SV$ whose components $V_X \to (SV)_X$ are defined by the inclusion maps $X \hookrightarrow X\sst$. We denote by $DV$ the cokernel of $\iota: V \to SV$. Following \cite[Definition 3.2]{CE}, we call the functor $D: V \mapsto DV$ the \emph{derivative functor} on the category of $\FI$-modules.

For any finite set $X$, let
\begin{equation*}
(\ws_{-1}V)_X := \bigoplus_{x\in X}V_{X\setminus\{x\}}.
\end{equation*}
Suppose $f: X\to Y$ is an injective map between finite sets. For any $x\in X$, the map $f$ restricts to an injective map $f\mid_{X\setminus \{x\}} : X\setminus \{x\} \to Y\setminus \{f(x)\}$. We let
\begin{equation*}
f_* : (\ws_{-1}V)_X \longrightarrow (\ws_{-1}V)_Y 
\end{equation*}
by the $\kk$-linear map whose restriction to the direct summand $V_{X\setminus \{x\}}$ is the map 
\begin{equation*}
(f\mid_{X\setminus\{x\}})_* : V_{X\setminus\{x\}} \longrightarrow V_{Y\setminus\{f(x)\}}.
\end{equation*}
This defines an $\FI$-module $\ws_{-1}V$. We call the functor $\ws_{-1}: V \mapsto \ws_{-1}V$ the \emph{negative-one shift functor} on the category of $\FI$-modules. (In \cite[Definition 2.19]{CEFN}, a negative shift functor $\ws_{-a}$ is defined for each integer $a\geqslant 1$.)

After recalling some generalities in the next section, we shall show that the negative-one shift functor $\ws_{-1}$ is a left adjoint functor to the shift functor $S$ and a right adjoint functor to the derivative functor $D$; we shall also show that for any FI-module $V$, the coinduction $QV$ of $V$ is an extension of $V$ by $\ws_{-1}V$. These properties seem to have gone unnoticed; for instance, the paper \cite{LRa} constructed a left adjoint functor of $S$ using the category algebra of $\FI$ and called it the induction functor.

\subsection*{Notations}
Suppose $\C$ is a small category. For any $X, Y \in \Ob(\C)$, we write $\C(X,Y)$ for the set of morphisms from $X$ to $Y$. 

We write $\C\Mod$ for the category of $\C$-modules. Suppose $V, W\in \C\Mod$. We write $\Hom_{\C\Mod}(V,W)$ for the $\kk$-module of all $\C$-module homomorphisms from $V$ to $W$. If $\phi \in \Hom_{\C\Mod}(V,W)$, we write $\phi_X : V_X\to W_X$ for the component of $\phi$ at $X\in \Ob(\C)$.

\section{Generalities on adjoint functors}

Let $\C$ and $\C'$ be small categories, and $F: \C\Mod \to \C'\Mod$ a functor. 

For any $X\in \Ob(\C)$, we define the $\C$-module $M(X)$ by
\begin{equation*}
M(X)_Y := \kk \C(X, Y) \quad \mbox{ for each } Y\in \Ob(\C),
\end{equation*}
that is, $M(X)$ is the composition of the functor $\C(X,-)$ followed by the free $\kk$-module functor.

For any morphism $f\in \C(X,Y)$, we have a $\C$-module homomorphism
\begin{equation*}
\rho_f : M(Y) \longrightarrow M(X), \quad g \mapsto gf,
\end{equation*}
where $g\in \C(Y,Z)$ for any $Z\in \Ob(\C)$.
Hence, we obtain a $\C'$-module homomorphism
\begin{equation*}
F(\rho_f) : F(M(Y)) \longrightarrow F(M(X)).
\end{equation*}

\begin{definition} \label{right adjoint}
Define a functor $F^\dag: \C'\Mod \to \C\Mod$ by
\begin{equation*}
(F^\dag (V))_X := \Hom_{\C'\Mod} ( F(M(X)), V ) \quad \mbox{ for each } V\in \C'\Mod, \; X\in \Ob(\C).
\end{equation*}
For any morphism $f\in \C(X,Y)$, the map $f_*: (F^\dag (V))_X \to (F^\dag (V))_Y$ is defined by
\begin{equation*}
f_*(\phi) := \phi \circ F(\rho_f) \quad \mbox{ for each } \phi\in (F^\dag (V))_X.
\end{equation*}
\end{definition}

\begin{proposition} \label{adjoint functor theorem}
Let $\C$ and $\C'$ be small categories. Let $F: \C\Mod \to \C'\Mod$ be a right exact functor which transforms direct sums to direct sums. Then $F^\dag$ is a right adjoint functor of $F$, that is, there is a natural isomorphism
\begin{equation*}
\Hom_{\C'\Mod}(F(V), W) \cong \Hom_{\C\Mod}(V, F^\dag(W)),
\end{equation*}
where $V\in \C\Mod$ and $W\in \C'\Mod$.
\end{proposition}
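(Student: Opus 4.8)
The plan is to produce the natural isomorphism by the standard "Yoneda-plus-density" route familiar from the construction of tensor-hom adjunctions over a category algebra, adapted to the hypotheses that $F$ is right exact and commutes with direct sums. The starting observation is that for the representable modules $M(X)$ one has a canonical isomorphism $\Hom_{\C\Mod}(M(X), V) \cong V_X$, functorial in $X$ and $V$ (the Yoneda lemma for $\C$-modules). Chasing through Definition~\ref{right adjoint}, this already gives $\Hom_{\C'\Mod}(F(M(X)), W) = (F^\dag(W))_X \cong \Hom_{\C\Mod}(M(X), F^\dag(W))$, so the desired isomorphism holds when $V = M(X)$; one checks that under these identifications the isomorphism is natural in $X$, i.e.\ compatible with the maps $\rho_f$ on the left and $f_*$ on the right — this is essentially forced by the definition of $f_*(\phi) = \phi \circ F(\rho_f)$.

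Next I would extend from a single representable to arbitrary direct sums of representables. Since $F$ transforms direct sums to direct sums and $\Hom$ out of a direct sum is a product (in the first variable on the left-hand side, and using that $F^\dag$ is built from a $\Hom$ so converts colimits in its source appropriately — more precisely, the left-hand side sends a direct sum $\bigoplus_i M(X_i)$ in the $V$-slot to $\prod_i \Hom_{\C'\Mod}(F(M(X_i)), W)$, and likewise the right-hand side sends it to $\prod_i \Hom_{\C\Mod}(M(X_i), F^\dag(W))$), the isomorphism for single representables assembles into an isomorphism for any $V$ that is a direct sum of representables, still natural in $W$.

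Finally I would pass to a general $\C$-module $V$ by taking a presentation $P_1 \to P_0 \to V \to 0$ with $P_0, P_1$ direct sums of representables (every $\C$-module has such a presentation, since the $M(X)$ generate $\C\Mod$). Applying $F$ and using right exactness of $F$ gives an exact sequence $F(P_1) \to F(P_0) \to F(V) \to 0$; applying the left-exact functor $\Hom_{\C'\Mod}(-, W)$ then exhibits $\Hom_{\C'\Mod}(F(V), W)$ as the kernel of $\Hom_{\C'\Mod}(F(P_0), W) \to \Hom_{\C'\Mod}(F(P_1), W)$. On the other side, $\Hom_{\C\Mod}(-, F^\dag(W))$ is also left exact, so $\Hom_{\C\Mod}(V, F^\dag(W))$ is the kernel of $\Hom_{\C\Mod}(P_0, F^\dag(W)) \to \Hom_{\C\Mod}(P_1, F^\dag(W))$. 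The isomorphisms already established for $P_0$ and $P_1$ fit into a commutative ladder between these two two-term complexes, so passing to kernels yields the isomorphism for $V$, and the whole construction is visibly natural in both $V$ and $W$.

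The step I expect to be the main obstacle is verifying naturality — in particular that the Yoneda identification $\Hom_{\C'\Mod}(F(M(X)), W) \cong (F^\dag(W))_X$ is compatible with the $\C$-module structure on $F^\dag(W)$ as defined via $f_*(\phi) = \phi \circ F(\rho_f)$, so that it genuinely comes from a natural transformation $\Hom_{\C\Mod}(M(X), F^\dag(W)) \cong (F^\dag(W))_X$ in the variable $X$. Everything else is bookkeeping, but this compatibility is what makes the ladder in the last step commute; I would check it by unwinding both sides on a morphism $f \in \C(X,Y)$ and comparing with the definition of $\rho_f$. The use of right exactness (rather than exactness) of $F$ is exactly what restricts us to a presentation of length one on the source side, which is all that is needed since $\Hom(-, W)$ only sees the cokernel.
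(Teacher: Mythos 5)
Your argument is correct, and it is the standard proof of this kind of adjunction: establish the isomorphism for representables by unwinding the definitions and the Yoneda lemma, extend to arbitrary direct sums of representables using that $F$ preserves direct sums, and then extend to all of $\C\Mod$ by taking a free presentation $P_1\to P_0\to V\to 0$ and using that $F$ is right exact while $\Hom(-,W)$ and $\Hom(-,F^\dag(W))$ are left exact. Note, however, that the paper itself does not give a proof of this proposition at all: it simply refers the reader to Rotman's textbook for the case of modules over a ring with identity and to Palmquist--Newell for the functor-category setting. So there is no route in the paper for you to match or diverge from; you have supplied the argument the paper outsources, and it is the expected one.

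The one place you yourself flag as the ``main obstacle'' --- compatibility of the Yoneda identification with the $\C$-module structure defined via $f_*(\phi)=\phi\circ F(\rho_f)$ --- does work out exactly as you predict, and is worth writing out once: if $\phi\in\Hom_{\C\Mod}(M(X),V)$ corresponds under Yoneda to $\phi_X(\id_X)\in V_X$, then $\phi\circ\rho_f$ corresponds to $\phi_Y(f)=f_*\bigl(\phi_X(\id_X)\bigr)$, which matches precomposition by $F(\rho_f)$ on the $F^\dag$ side by definition. The only other spot that deserves a sentence rather than the word ``visibly'' is naturality in $V$ at the presentation step: one should observe that a map $V\to V'$ lifts to a map of free presentations, that the resulting ladder commutes because the isomorphism is already known to be natural on direct sums of representables, and that the induced map on kernels is therefore independent of the chosen lift and presentation. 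This is standard projective-resolution bookkeeping, but it is the content of the phrase ``natural in $V$,'' so it should be said.
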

\begin{proof}
This is a well-known result for modules over rings with identity, see \cite[Theorem 5.51]{Ro}. For a proof in our setting, see \cite[Proposition 1.3 and Theorem 2.1]{PN}.
\end{proof}

The proofs of the main results of this article are essentially exercises in applications of Proposition \ref{adjoint functor theorem}; nevertheless, they do not appear to be obvious.

\begin{remark}
In all our applications of Proposition \ref{adjoint functor theorem}, we have a pair of functors $F,G:\C\Mod\to\C\Mod$ with the following properties. For each $X\in \Ob(\C)$, there exists $X_i \in \Ob(\C)$ for $i\in I_X$ (where $I_X$ is a finite indexing set) such that there is a $\C$-module isomorphism
\begin{equation*}
\eta_X : \bigoplus_{i\in I_X} M(X_i) \longrightarrow F(M(X)).
\end{equation*}
Moreover, for each $V\in\C\Mod$, there is a decomposition $G(V)_X = \bigoplus_{i\in I_X} V_{X_i}$ such that for any $\psi\in \Hom_{\C\Mod}(V,W)$, one has $G(\psi)_X = \bigoplus_{i\in I_X} \psi_{X_i}$. It follows then that there is a $\kk$-module isomorphism 
\begin{equation*}
\alpha_X: (F^\dag (V))_X \longrightarrow G(V)_X, \quad \phi \mapsto \sum_{i\in I_X} \phi_{X_i} ( (\eta_X)_{X_i} (\id_{X_i}) ),
\end{equation*}
where $\id_{X_i}\in M(X_i)_{X_i}$. Therefore, if $F$ is right exact and sends direct sums to direct sums, then by Proposition \ref{adjoint functor theorem}, to show that $G$ is a right adjoint functor of $F$, it suffices to verify that the collection of $\kk$-module isomorphisms $\alpha_X$ for $X\in \Ob(\C)$ are compatible with the $\C$-module structures on $F^\dag (V)$ and $G(V)$.
\end{remark}

\section{Left adjoint of the shift functor}

Let $X$ be a finite set. By \cite[Lemma 2.17]{CEFN}, there is an isomorphism 
\begin{equation} \label{wsM(X)}
\eta_X: M(X\sqcup \{\star\})  \longrightarrow  \ws_{-1}M(X) 
\end{equation}
defined as follows. For any finite set $Y$ and injective map $f:X\sqcup\{\star\}\to Y$, one has $f\mid_X : X\to Y\setminus \{f(\star)\}$. Let $\eta_X(f)$ be the element $f\mid_X$ of the direct summand $M(X)_{Y\setminus \{ f(\star) \}}$ of $(\ws_{-1}M(X))_Y$.

\begin{theorem}
The functor $\ws_{-1}$ is a left adjoint of the shift functor $S: \FI\Mod \to \FI\Mod$.
\end{theorem}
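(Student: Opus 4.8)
The plan is to deduce this from the Remark in Section~2, applied to the pair $F=\ws_{-1}$ and $G=S$; concretely, I would show that $S$ is naturally isomorphic to $\ws_{-1}^{\dag}$, so that Proposition~\ref{adjoint functor theorem} exhibits $\ws_{-1}$ as a left adjoint of $S$. First one checks the hypotheses: $\ws_{-1}$ is exact, in particular right exact, and transforms direct sums into direct sums, since $(\ws_{-1}V)_X=\bigoplus_{x\in X}V_{X\setminus\{x\}}$ is assembled from evaluation functors and direct sums. For the data required by the Remark I would take, for each finite set $X$, the indexing set $I_X$ to be a single point with associated object $X\sst$: then the isomorphism $\eta_X\colon M(X\sst)\to\ws_{-1}M(X)$ of \eqref{wsM(X)} is precisely what is needed, while the identities $(SV)_X=V_{X\sst}$ and $S(\psi)_X=\psi_{X\sst}$ supply the required decomposition of $G$. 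The Remark then produces a $\kk$-module isomorphism $\alpha_X\colon(\ws_{-1}^{\dag}V)_X\to(SV)_X$, natural in $V$, and reduces the theorem to a single point: that the $\alpha_X$ are compatible with the $\FI$-module structures.

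To carry out that compatibility check I would first identify the distinguished element. Feeding $f=\id_{X\sst}$ into the formula for $\eta_X$ yields $(\eta_X)_{X\sst}(\id_{X\sst})=\id_X$, regarded as an element of the summand $M(X)_{(X\sst)\setminus\{\star\}}=M(X)_X$ of $(\ws_{-1}M(X))_{X\sst}$ indexed by $\star$; write $\xi_X$ for this element, so that $\alpha_X(\phi)=\phi_{X\sst}(\xi_X)$ for $\phi\in(\ws_{-1}^{\dag}V)_X=\Hom_{\FI\Mod}(\ws_{-1}M(X),V)$.

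Now fix a morphism $f\colon X\to Y$ in $\FI$; the goal is $\alpha_Y(f_*\phi)=f_*(\alpha_X\phi)$, where on the left $f_*\phi=\phi\circ\ws_{-1}(\rho_f)$ and on the right $f_*$ is $V(\sigma f)\colon(SV)_X\to(SV)_Y$ with $\sigma f=f\sqcup\id_{\{\star\}}$. Expanding the left side gives $\phi_{Y\sst}\bigl((\ws_{-1}(\rho_f))_{Y\sst}(\xi_Y)\bigr)$, while naturality of $\phi$ along $\sigma f$ rewrites $V(\sigma f)(\phi_{X\sst}(\xi_X))$ on the right as $\phi_{Y\sst}\bigl((\ws_{-1}M(X))(\sigma f)(\xi_X)\bigr)$. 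Hence it suffices to verify the identity
\[
(\ws_{-1}(\rho_f))_{Y\sst}(\xi_Y)\;=\;(\ws_{-1}M(X))(\sigma f)(\xi_X)
\]
in $(\ws_{-1}M(X))_{Y\sst}$. I expect both sides to equal the element $f\in M(X)_Y$ placed in the $\star$-indexed summand: the homomorphism $\ws_{-1}(\rho_f)$ acts summandwise, and on the $\star$-summand it is $(\rho_f)_Y\colon M(Y)_Y\to M(X)_Y$, sending $\id_Y$ to $\id_Y f=f$; on the other side $\sigma f$ fixes $\star$ and restricts to $f$ on $X$, so the $\FI$-action of $\sigma f$ on $\ws_{-1}M(X)$ carries the $\star$-summand to the $\star$-summand via the structure map of $M(X)$ along $f$, taking $\id_X$ to $f$.

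The only genuine obstacle is the bookkeeping of which direct summand is hit at each stage; the key observation is that $\sigma f$ fixes the adjoined element $\star$, so the $\star$-summand maps to the $\star$-summand and the verification collapses to the triviality $f=f$. Everything else is formal, and combining this with the Remark and Proposition~\ref{adjoint functor theorem} completes the proof.
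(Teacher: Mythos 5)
Your proof is correct and follows essentially the same route as the paper: you apply the Remark of Section~2 with $I_X$ a singleton and $X_i = X\sst$, use the isomorphism $\eta_X$ of \eqref{wsM(X)} to build $\alpha_X$, and reduce the $\FI$-compatibility to the observation that both $(\ws_{-1}(\rho_f))_{Y\sst}(\xi_Y)$ and $(\ws_{-1}M(X))(\sigma f)(\xi_X)$ equal $f$ in the $\star$-indexed summand of $(\ws_{-1}M(X))_{Y\sst}$. The paper organizes the same verification as a chain of equalities passing through the naturality of $\eta_X$, but the key computation and underlying ideas coincide.
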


\begin{proof}
The functor $\ws_{-1}$ is exact and transforms direct sums to direct sums. By Proposition \ref{adjoint functor theorem}, we need to show that the functors $\ws_{-1}^\dag$ and $S$ are isomorphic.

Let $V$ be an $\FI$-module. Let $X$ be any finite set. From Definition \ref{right adjoint} and (\ref{wsM(X)}), we have a $\kk$-module isomorphism
\begin{equation*}
\alpha_X: (\ws_{-1}^\dag V)_X \longrightarrow (SV)_X, \quad \phi \mapsto \phi_{X\sqcup\{\star\}}((\eta_X)_{X\sqcup\{\star\}}(\id_{X\sqcup\{\star\}})).
\end{equation*}
We claim that this collection of $\kk$-module isomorphisms over all finite sets $X$ are compatible with the $\FI$-module structures on $\ws_{-1}^\dag V$ and $SV$. 

To verify the claim, let $f:X\to Y$ be an injective map between finite sets, and let $\phi\in (\ws_{-1}^\dag V)_X$. Then one has:
\begin{align*}
\alpha_Y( f_*(\phi) ) &= f_*(\phi)_{Y\sqcup\{\star\}}((\eta_Y)_{Y\sqcup\{\star\}}(\id_{ Y\sqcup\{\star\} }))
= \phi_{Y\sqcup\{\star\}}( \ws_{-1}(\rho_f)_{Y\sqcup\{\star\}} ((\eta_Y)_{Y\sqcup\{\star\}}(\id_{ Y\sqcup\{\star\} })  ) ) \\
&= \phi_{Y\sqcup\{\star\}}(  (\eta_X)_{Y\sqcup\{\star\}}(f \sqcup \id_{\{\star\}} ) ) 
= \phi_{Y\sqcup\{\star\}}( (f\sqcup \id_{\{\star\}})_* ( (\eta_X)_{X\sqcup\{\star\}}(\id_{X\sqcup\{\star\}}) ) ) \\
&= (f\sqcup \id_{\{\star\}})_* (\phi_{X\sqcup\{\star\}}(  (\eta_X)_{X\sqcup\{\star\}}(\id_{X\sqcup\{\star\}}) )) 
= f_* (\alpha_X(\phi)).
\end{align*}
\end{proof}

\section{Right adjoint of the derivative functor}

Let $X$ be a finite set. It is known that there is an isomorphism
\begin{equation} \label{SM(X)}
\Theta_X: M(X) \oplus \left( \bigoplus_{x\in X} M(X\setminus \{x\}) \right) \longrightarrow SM(X) ,
\end{equation}
see \cite[Proof of Proposition 2.12]{CEFN}. We need this isomorphism explicitly. The restriction of $\Theta$ to the direct summand $M(X)$ is $\iota: M(X) \to SM(X)$. To define the restriction of $\Theta$ to the direct summand $M(X\setminus\{x\})$, we shall use the following notation.

\begin{notation}
Suppose $f: X\to Y$ is a map between finite sets. Suppose $\{w\}$ and $\{z\}$ are any one-element sets. We write $f\sqcup (w\to z)$ for the map $X\sqcup \{w\} \to Y\sqcup \{z\}$ whose restriction to $X$ is $f$ and which sends $w$ to $z$.
\end{notation}

Let $Y$ be a finite set. The map $\Theta_X : M(X\setminus\{x\})_Y \to SM(X)_Y$ is defined by
\begin{equation*}
f \mapsto f\sqcup (x\to \star)
\end{equation*}
for each injective map $f: X\setminus\{x\} \to Y$.  

\begin{theorem}
The functor $\ws_{-1}$ is a right adjoint of the derivative functor $D: \FI\Mod \to \FI\Mod$.
\end{theorem}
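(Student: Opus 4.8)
The plan is to argue exactly as in the proof of the previous theorem, applying Proposition \ref{adjoint functor theorem} to the functor $D$ by way of the Remark. First I would check that $D$ is right exact and transforms direct sums to direct sums. Both are immediate from the fact that $D$ is, by construction, the cokernel of the natural transformation $\iota\colon \id\to S$: the functors $\id$ and $S$ are exact and commute with direct sums, the cokernel of a natural transformation between exact functors is right exact (apply the snake lemma to the two short exact sequences obtained from a given one), and cokernels commute with direct sums. Hence Proposition \ref{adjoint functor theorem} applies, and it remains to exhibit a natural isomorphism $D^\dag\cong\ws_{-1}$.

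Next I would exploit the explicit splitting $\Theta_X$ of (\ref{SM(X)}). Since the restriction of $\Theta_X$ to the summand $M(X)$ is the map $\iota\colon M(X)\to SM(X)$, and $\iota$ is (split) injective on the free module $M(X)$, passing to cokernels yields a canonical $\FI$-module isomorphism $\eta_X\colon \bigoplus_{x\in X}M(X\setminus\{x\}) \to D(M(X))$, namely the restriction of $\Theta_X$ to $\bigoplus_{x\in X}M(X\setminus\{x\})$ followed by the quotient map $SM(X)\to D(M(X))$. Together with the tautological decomposition $(\ws_{-1}V)_X=\bigoplus_{x\in X}V_{X\setminus\{x\}}$ and the identity $(\ws_{-1}\psi)_X=\bigoplus_{x\in X}\psi_{X\setminus\{x\}}$ valid for every $\FI$-module homomorphism $\psi$, this places us precisely in the situation of the Remark, with $F=D$, $G=\ws_{-1}$, indexing set $I_X=X$, and $X_x=X\setminus\{x\}$. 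We therefore obtain $\kk$-module isomorphisms $\alpha_X\colon (D^\dag V)_X\to (\ws_{-1}V)_X$ for all finite sets $X$, and by the Remark it suffices to verify that they are compatible with the $\FI$-module structures.

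The substantive computation is the description of $D(\rho_f)\colon D(M(Y))\to D(M(X))$ attached to an injection $f\colon X\to Y$. I would trace the generator $\id_{Y\setminus\{y\}}$ of the $y$-summand through $\eta_Y$: under $\Theta_Y$ it becomes the injection $Y\to (Y\setminus\{y\})\sqcup\{\star\}$ equal to the identity on $Y\setminus\{y\}$ and sending $y$ to $\star$, and applying $S(\rho_f)$, which is precomposition with $f$, produces the injection $X\to (Y\setminus\{y\})\sqcup\{\star\}$ equal to $f$ on $f^{-1}(Y\setminus\{y\})$ and sending $f^{-1}(y)$ to $\star$ when $y\in f(X)$. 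Consequently, after quotienting by $\iota(M(X))$, the $y$-summand of $D(M(Y))$ is annihilated when $y\notin f(X)$, while when $y=f(x)$ it is carried to the $x$-summand $M(X\setminus\{x\})$ of $D(M(X))$ by $\rho_{f\mid_{X\setminus\{x\}}}$. Inserting this into the formula $\alpha_X(\phi)=\sum_{x\in X}\phi_{X\setminus\{x\}}((\eta_X)_{X\setminus\{x\}}(\id_{X\setminus\{x\}}))$ from the Remark and using that $\phi$ is an $\FI$-module homomorphism, a short chain of equalities of exactly the same shape as in the proof of the previous theorem yields $\alpha_Y(f_*(\phi))=f_*(\alpha_X(\phi))$, where on the right side $f_*$ acts on $\ws_{-1}V$ by carrying the $x$-summand to the $f(x)$-summand via $(f\mid_{X\setminus\{x\}})_*$, which is precisely the defining $\FI$-module structure on $\ws_{-1}V$.

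I expect the only real obstacle to be pinning down this last step correctly: identifying which summand of $D(M(X))$ receives which summand of $D(M(Y))$ under $D(\rho_f)$, and recognising that exactly the ``diagonal'' injections $y\mapsto\star$ survive in the cokernel while the summand $M(X)=\iota(M(X))$ dies. Once that bookkeeping is settled, the remaining verification is a routine diagram chase mirroring the displayed computation in the proof of the preceding theorem.
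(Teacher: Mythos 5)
Your proposal is correct and follows essentially the same route as the paper: you invoke the Remark with the isomorphism obtained by restricting $\Theta_X$ to $\bigoplus_{x\in X}M(X\setminus\{x\})$ and composing with the quotient map to $DM(X)$, then verify compatibility of the resulting isomorphisms $(D^\dag V)_X\cong(\ws_{-1}V)_X$ with the $\FI$-structure by tracking $D(\rho_f)$ on generators and observing that the summands indexed by $y\notin f(X)$ die in the cokernel while the summand indexed by $y=f(x)$ is carried to the $x$-summand. The only difference is that you spell out the right-exactness of $D$ (via the snake lemma) in slightly more detail than the paper, which simply asserts it; otherwise the argument is the same.
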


\begin{proof}
The functor $D$ is right exact and transforms direct sums to direct sums. By Proposition \ref{adjoint functor theorem}, we need to show that the functors $D^\dag$ and $\ws_{-1}$ are isomorphic.

Let $X$ be a finite set. It follows from above that we have an isomorphism
\begin{equation} \label{DM(X)}
\theta_X: \bigoplus_{x\in X} M(X\setminus \{x\}) \longrightarrow DM(X),
\end{equation}
where $\theta_X$ is defined by the composition of $\Theta_X$ with the quotient map $\pi_X: SM(X)\to DM(X)$. 

Let $V$ be an $\FI$-module. From Definition \ref{right adjoint} and (\ref{DM(X)}), we have a $\kk$-module isomorphism
\begin{equation*}
\beta_X: (D^\dag V)_X \longrightarrow (\ws_{-1}V)_X, \quad \phi\mapsto \sum_{x\in X} \phi_{X\setminus\{x\}}((\theta_X)_{X\setminus\{x\}}(\id_{X\setminus\{x\}})).
\end{equation*}
We claim that this collection of $\kk$-module isomorphisms over all finite sets $X$ are compatible with the $\FI$-module structures on $D^\dag V$ and $\ws_{-1}V$.

To verify the claim, let $f: X\to Y$ be an injective map between finite sets, and let $\phi\in(D^\dag V)_X$. Then one has:
\begin{align*}
\beta_Y (f_*(\phi)) &= \sum_{y\in Y}  f_*(\phi)_{Y\setminus\{y\}} ((\theta_Y)_{Y\setminus\{y\}} (\id_{Y\setminus\{y\}})) 
= \sum_{y\in Y} \phi_{Y\setminus\{y\}} ( D(\rho_f)_{Y\setminus\{y\}} (  (\theta_Y)_{Y\setminus\{y\}}( \id_{Y\setminus\{y\}}) ) )\\
&= \sum_{y\in Y} \phi_{Y\setminus\{y\}} ( \pi_X( (\id_{Y\setminus\{y\}} \sqcup (y\to \star) ) \circ f  )  )  
= \sum_{x\in X} \phi_{Y\setminus\{f(x)\}} ( (\theta_X)_{Y\setminus\{f(x)\}}( f\mid_{X\setminus\{x\}}   )  ) \\
&= \sum_{x\in X} \phi_{Y\setminus\{f(x)\}} ( (f\mid_{X\setminus\{x\}})_*((\theta_X)_{X\setminus\{x\}} (\id_{X\setminus\{x\}}) )  ) \\
&= \sum_{x\in X} (f\mid_{X\setminus\{x\}})_*( \phi_{X\setminus\{x\}} ((\theta_X)_{X\setminus\{x\}}(\id_{X\setminus\{x\}}) ) ) 
= f_*(\beta_X(\phi)).
\end{align*}
\end{proof}

\section{Coinduction functor}

The coinduction functor $Q$ on the category of $\FI$-modules was defined in \cite[Definition 4.1]{GL} as $S^\dag$. It is a right adjoint functor of $S$ by Proposition \ref{adjoint functor theorem} (see \cite[Lemma 4.2]{GL}). In this section, we give an explicit description of $Q$ in terms of $\ws_{-1}$.

\begin{notation}
Suppose $f: X \to Y$ is a map between finite sets. If $y\in Y\setminus f(X)$, then define 
\begin{equation*}
\partial_y f: X\to Y\setminus \{y\}
\end{equation*}
by $\partial_y f(x):=f(x)$ for each $x\in X$.
\end{notation}

\begin{notation}
Suppose $V$ is an $\FI$-module. If $f:X\to Y$ is a morphism in $\FI$, define the map
\begin{equation*}
\partial f_* : V_X \longrightarrow (\ws_{-1}V)_Y, \quad v \mapsto \sum_{y\in Y\setminus f(X)} (\partial_y f)_*(v),
\end{equation*}
where $(\partial_y f)_*: V_X \to V_{Y\setminus \{y\}}$ is defined by the $\FI$-module structure of $V$. 
\end{notation}

It is easily checked that if $f:X\to Y$ and $g:Y\to Z$ are morphisms in $\FI$, then one has:
\begin{equation} \label{leibniz rule for partial}
\partial(gf)_* = (\partial g_*) f_* + g_* (\partial f_*).
\end{equation}

We define an $\FI$-module $Q'V$ as follows. For each finite set $X$, let
\begin{equation} \label{Q'V}
(Q'V)_X := V_X \oplus (\ws_{-1}V)_X.
\end{equation}
If $f:X\to Y$ is a morphism in $\FI$, then define $f_*: (Q'V)_X \to (Q'V)_Y$ to be 
\begin{equation*}
\left(
\begin{array}{cc}
f_* & 0 \\
\partial f_* & f_*
\end{array}
\right),
\end{equation*}
where we use column notation for the direct sum in (\ref{Q'V}). It follows from (\ref{leibniz rule for partial}) that $Q'V$ is an $\FI$-module.

\begin{theorem} \label{right adjoint of S}
The coinduction functor $Q:\FI\Mod\to\FI\Mod$ is isomorphic to the functor $Q': V\mapsto Q'V$.
\end{theorem}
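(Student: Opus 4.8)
The plan is to follow the same pattern as in the two previous sections. Since the coinduction functor $Q$ is by definition $S^\dag$ and $S$ is exact and transforms direct sums to direct sums, it suffices to show $S^\dag\cong Q'$, and by the remark following Proposition \ref{adjoint functor theorem} this reduces to exhibiting $\kk$-module isomorphisms $(S^\dag V)_X\to(Q'V)_X$ that are compatible with the $\FI$-module structures. Taking the explicit isomorphism $\Theta_X$ of (\ref{SM(X)}) as the decomposition datum — so that the summands of $SM(X)$ are indexed by $\{\star\}\sqcup X$, with $M(X)$ attached to the extra index and $M(X\setminus\{x\})$ to $x\in X$ — Definition \ref{right adjoint} gives, for every $\FI$-module $V$ and finite set $X$, a $\kk$-module isomorphism
\begin{equation*}
\gamma_X: (S^\dag V)_X \longrightarrow (Q'V)_X, \qquad \phi \longmapsto \left( \phi_X(\iota_X(\id_X)),\ \sum_{x\in X}\phi_{X\setminus\{x\}}\bigl((\Theta_X)_{X\setminus\{x\}}(\id_{X\setminus\{x\}})\bigr) \right),
\end{equation*}
written in the column notation of (\ref{Q'V}); here $\iota_X(\id_X)\in(SM(X))_X$ is the inclusion $X\hookrightarrow X\sst$, and $(\Theta_X)_{X\setminus\{x\}}(\id_{X\setminus\{x\}}) = \id_{X\setminus\{x\}}\sqcup(x\to\star)\in(SM(X))_{X\setminus\{x\}}$. (Here $Q'$ is regarded as a functor via $Q'(\psi)_X = \psi_X\oplus\ws_{-1}(\psi)_X$, which is a homomorphism because $\partial f_*$ is natural in $V$.) It then remains only to check that $\gamma_Y(f_*(\phi)) = f_*(\gamma_X(\phi))$ for every morphism $f:X\to Y$ in $\FI$ and every $\phi\in(S^\dag V)_X$, where on the right $f_*$ denotes the matrix $\left(\begin{smallmatrix} f_* & 0 \\ \partial f_* & f_* \end{smallmatrix}\right)$.

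I would check this one component at a time. For the $V_Y$-component: $f_*(\phi) = \phi\circ S(\rho_f)$, and a short computation identifies $S(\rho_f)_Y(\iota_Y(\id_Y))$ with $(SM(X))(f)(\iota_X(\id_X))$ (both equal the composite of $f$ with the inclusion $Y\hookrightarrow Y\sst$); hence, by naturality of $\phi$, the $V_Y$-component of $\gamma_Y(f_*(\phi))$ equals $f_*\bigl(\phi_X(\iota_X(\id_X))\bigr)$, which is $f_*$ applied to the $V_X$-component of $\gamma_X(\phi)$, as wanted (the top-right entry of the matrix being $0$).

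The substance of the proof, and the step I expect to be the main obstacle, is the $(\ws_{-1}V)_Y$-component. For each $y\in Y$ one must evaluate $S(\rho_f)_{Y\setminus\{y\}}\bigl((\Theta_Y)_{Y\setminus\{y\}}(\id_{Y\setminus\{y\}})\bigr)$, which is the composite $(\id_{Y\setminus\{y\}}\sqcup(y\to\star))\circ f\colon X\to(Y\setminus\{y\})\sqcup\{\star\}$, and recognize it through $\Theta_X^{-1}$; the sum over $y\in Y$ then splits into two parts. When $y\notin f(X)$, this composite lands in $Y\setminus\{y\}$ and equals $(\Theta_X)_{Y\setminus\{y\}}(\partial_y f)$ in the $M(X)$-summand, so applying $\phi$ and using naturality, these terms sum to $\sum_{y\in Y\setminus f(X)}(\partial_y f)_*\bigl(\phi_X(\iota_X(\id_X))\bigr) = \partial f_*\bigl(\phi_X(\iota_X(\id_X))\bigr)$, i.e.\ the bottom-left matrix entry applied to the $V_X$-component of $\gamma_X(\phi)$. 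When $y = f(x)$ for the unique $x\in X$ with $f(x)=y$, the composite equals $(f\mid_{X\setminus\{x\}})\sqcup(x\to\star) = (\Theta_X)_{Y\setminus\{y\}}(f\mid_{X\setminus\{x\}})$ in the $M(X\setminus\{x\})$-summand, so applying $\phi$, using naturality, and summing over $x\in X$ yields $\sum_{x\in X}(f\mid_{X\setminus\{x\}})_*\bigl(\phi_{X\setminus\{x\}}((\Theta_X)_{X\setminus\{x\}}(\id_{X\setminus\{x\}}))\bigr)$, which by the definition of the $\FI$-module $\ws_{-1}V$ is exactly $f_*$ applied to the $(\ws_{-1}V)_X$-component of $\gamma_X(\phi)$, i.e.\ the bottom-right entry. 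Adding the two parts gives $\gamma_Y(f_*(\phi)) = f_*(\gamma_X(\phi))$. The only real difficulty is the bookkeeping — tracking where each $x\in X$ goes under $(\id_{Y\setminus\{y\}}\sqcup(y\to\star))\circ f$, and seeing that the single index $y$ ranging over $Y$ on the $S^\dag$ side reorganizes into the ``newly hit point'' part, producing $\partial f_*$, and the ``old point'' part, producing the action of $f$ on $\ws_{-1}V$; this bifurcation is exactly what the Leibniz-type identity (\ref{leibniz rule for partial}) records.
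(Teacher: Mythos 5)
Your proposal is correct and takes essentially the same approach as the paper: same isomorphism $\gamma_X$, same use of the decomposition $\Theta_X$ from \eqref{SM(X)}, and the same key step of splitting the sum over $y\in Y$ into the cases $y\in Y\setminus f(X)$ (yielding $\partial f_*$) and $y\in f(X)$ (yielding the $\ws_{-1}V$-action). The only difference is organizational — you verify the two components of $\gamma_Y(f_*(\phi)) = f_*(\gamma_X(\phi))$ separately, whereas the paper carries out a single chain of equalities — and this does not change the substance.
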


\begin{proof}
Let $V$ be an $\FI$-module. Let $X$ be any finite set. One has $QV = S^\dag V$ by definition of $Q$. From Definition \ref{right adjoint} and (\ref{SM(X)}), we have a $\kk$-module isomorphism
\begin{equation*}
\gamma_X : (S^\dag V)_X \longrightarrow (Q'V)_X, \quad \phi \mapsto \phi_X((\Theta_X)_X(\id_X)) + \sum_{x\in X} \phi_{X\setminus\{x\}} ( (\Theta_X)_{X\setminus\{x\}}  (\id_{X\setminus\{x\}})). 
\end{equation*}
We claim that this collection of $\kk$-module isomorphisms over all finite sets $X$ are compatible with the $\FI$-module structures on $S^\dag V$ and $Q'V$.

To verify the claim, let $f: X\to Y$ be an injective map between finite sets, and let $\phi\in(S^\dag V)_X$. Then one has:
\begin{align*}
\gamma_Y (f_*(\phi)) =&  f_*(\phi)_Y ((\Theta_Y)_Y(\id_Y)) + \sum_{y\in Y}  f_*(\phi)_{Y\setminus\{y\}} ((\Theta_Y)_{Y\setminus\{y\}}  (\id_{Y\setminus\{y\}})) \\
=& \phi_Y( S(\rho_f)_Y ( (\Theta_Y)_Y( \id_{Y}) ) ) + \sum_{y\in Y} \phi_{Y\setminus\{y\}} ( S(\rho_f)_{Y\setminus\{y\}} (  (\Theta_Y)_{Y\setminus\{y\}}  ( \id_{Y\setminus\{y\}}) ) )\\
=& \phi_Y( (\Theta_X)_Y(f) ) + \sum_{y\in Y} \phi_{Y\setminus\{y\}} (  (\id_{Y\setminus\{y\}} \sqcup (y\to \star) ) \circ f   )  \\
=& \phi_Y( (\Theta_X)_Y(f) ) 
+ \sum_{y\in Y\setminus f(X)} \phi_{Y\setminus\{y\}} ( (\Theta_X)_{Y\setminus\{y\}} (\partial_y f) ) \\
&+ \sum_{x\in X} \phi_{Y\setminus\{f(x)\}} ( (\Theta_X)_{Y\setminus\{f(x)\}} ( f\mid_{X\setminus\{x\}}   )  ) \\
=& \phi_Y(f_*( (\Theta_X)_X(\id_X))) 
+ \sum_{y\in Y\setminus f(X)} \phi_{Y\setminus\{y\}} ( (\partial_y f)_* ((\Theta_X)_X (\id_X)) ) \\
&+ \sum_{x\in X} \phi_{Y\setminus\{f(x)\}} ( (f\mid_{X\setminus\{x\}})_*((\Theta_X)_{X\setminus\{x\}}  (\id_{X\setminus\{x\}}) )  ) \\
=& f_*(\phi_X((\Theta_X)_X(\id_X))) 
+ \sum_{y\in Y\setminus f(X)}  (\partial_y f)_* ( \phi_X( (\Theta_X)_X (\id_X)) ) \\
&+ \sum_{x\in X} (f\mid_{X\setminus\{x\}})_*( \phi_{X\setminus\{x\}} ( (\Theta_X)_{X\setminus\{x\}}  (\id_{X\setminus\{x\}}) ) ) 
= f_*(\gamma_X(\phi)).
\end{align*}
\end{proof}

Let $\FB$ be the category whose objects are the finite sets and whose morphisms are the bijections. Then $\FB$ is a subcategory of $\FI$ and so there is a natural forgetful functor from $\FI\Mod$ to $\FB\Mod$ (see \cite{CE}). 

\begin{corollary} \label{QV as extension}
Let $V$ be an $\FI$-module. Then there is a short exact sequence
\begin{equation*}
0 \longrightarrow \ws_{-1}V \longrightarrow QV \longrightarrow V \longrightarrow 0
\end{equation*}
of $\FI$-modules which splits after applying the forgetful functor to the category of $\FB$-modules.
\end{corollary}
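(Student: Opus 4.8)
The plan is to read off everything from Theorem \ref{right adjoint of S}, which identifies $Q$ with $Q'$, so that we may work entirely with the explicit model $Q'V$ whose underlying $\FI$-module on objects is $V_X \oplus (\ws_{-1}V)_X$ with transition maps given by the lower-triangular matrix $\left(\begin{smallmatrix} f_* & 0 \\ \partial f_* & f_* \end{smallmatrix}\right)$.

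First I would define the two $\FI$-module homomorphisms. The map $\ws_{-1}V \to Q'V$ should be the inclusion of the second summand, i.e.\ the component at $X$ is $w \mapsto (0, w)$ in column notation; the block-triangular form of $f_*$ on $Q'V$ shows immediately that this is natural, since the restriction of $\left(\begin{smallmatrix} f_* & 0 \\ \partial f_* & f_* \end{smallmatrix}\right)$ to the second summand is $(0, f_*)^{\mathrm{T}}$, which is exactly $\ws_{-1}(f)$ followed by the inclusion. The map $Q'V \to V$ should be the projection onto the first summand, $(v, w) \mapsto v$; naturality here is the statement that the top row of the matrix is $(f_*, 0)$, so projection intertwines the transition maps. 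Composing, the image of $\ws_{-1}V \to Q'V$ is exactly the kernel of $Q'V \to V$ at every object, so the sequence $0 \to \ws_{-1}V \to Q'V \to V \to 0$ is exact in $\FI\Mod$; transporting along the isomorphism $Q' \cong Q$ of Theorem \ref{right adjoint of S} gives the stated sequence with $QV$.

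For the splitting after forgetting to $\FB\Mod$, I would observe that when $f$ is a bijection the set $Y \setminus f(X)$ is empty, so $\partial f_* = 0$ and the transition matrix on $Q'V$ becomes diagonal, $\left(\begin{smallmatrix} f_* & 0 \\ 0 & f_* \end{smallmatrix}\right)$. Hence, as an $\FB$-module, $Q'V$ is literally the direct sum $V \oplus \ws_{-1}V$, and the inclusion of the first summand $v \mapsto (v, 0)$ is an $\FB$-module splitting of the projection $Q'V \to V$. (Equivalently one notes that the inclusion $V \hookrightarrow Q'V$ fails to be an $\FI$-module map precisely because of the off-diagonal term $\partial f_*$, which vanishes on $\FB$.) This finishes the proof once we again transport along the isomorphism with $QV$.

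I do not expect any real obstacle here: the content has all been absorbed into Theorem \ref{right adjoint of S}, and what remains is the bookkeeping of checking that inclusion and projection for a block-triangular transition map are natural, together with the one-line observation that the off-diagonal block is supported on non-surjective morphisms and so dies in $\FB\Mod$. The only point requiring a word of care is to state clearly that the isomorphism $Q' \xrightarrow{\sim} Q$ of Theorem \ref{right adjoint of S} is an isomorphism of functors, so that the short exact sequence and its $\FB$-splitting for $Q'V$ transfer verbatim to $QV$; everything else is formal.
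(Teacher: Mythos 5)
Your proof is correct and follows the same route as the paper, which simply says the corollary is immediate from Theorem \ref{right adjoint of S} and the definition of $Q'$; you have just written out the bookkeeping (naturality of inclusion/projection for the block-triangular transition maps, and vanishing of $\partial f_*$ on bijections) that the paper leaves implicit.
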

\begin{proof}
Immediate from Theorem \ref{right adjoint of S} and the definition of $Q'$.
\end{proof}

From Corollary \ref{QV as extension} and (\ref{wsM(X)}), we recover \cite[Theorem 1.3]{GL} for $\FI$.

\begin{remark}
In \cite{GL}, for any finite field $\mathbb{F}_q$, Gan and Li also studied the coinduction functor for $\mathrm{VI}$-modules where $\mathrm{VI}$ is the category whose objects are the finite dimensional vector spaces over $\mathbb{F}_q$ and whose morphisms are the injective linear maps. Similarly to $\FI$-modules, the coinduction functor for $\mathrm{VI}$-modules is defined as $S^\dag$ where $S$ is the shift functor for $\mathrm{VI}$-modules. However, we do not know of analogues of the negative-one shift functor $\ws_{-1}$ and the derivative functor $D$ for $\mathrm{VI}$-modules.
\end{remark}

\section*{Acknowledgments}
I am grateful to the referee for making detailed suggestions to improve the exposition of the paper.

\end{document}